\newtheorem{thm}{Theorem}[section]
\newtheorem{cor}[thm]{Corollary}
\numberwithin{equation}{section}
\def\pn{\par\noindent}
\begin{document}

\leftline{ \scriptsize \it International Journal of Group Theory  Vol. {\bf\rm XX} No. X {\rm(}201X{\rm)}, pp XX-XX.}

\vspace{1.3 cm}

\title{On the Order of the Schur Multiplier of a Pair of Finite p-Groups II}
\author{ Fahimeh Mohammadzadeh,  Azam Hokmabadi$^*$ and Behrooz Mashayekhy}

\thanks{{\scriptsize
\hskip -0.4 true cm MSC(2010): Primary: 20E34; Secondary: 20D15.
\newline Keywords:  pair of groups, Schur multiplier, finite $p$-groups.\\
$*$Corresponding author
\newline\indent{\scriptsize $\copyright$ 2011 University of Isfahan}}}

\maketitle

\begin{center}
Communicated by\;
\end{center}

\begin{abstract} Let $G$ be a finite $p$-group and $N$ be a normal subgroup of $G$, with
$|N|=p^n$ and $|G/N|=p^m$. A result of Ellis (1998) shows
that the order of the Schur multiplier of such a pair $(G,N)$ of finite $p$-groups is bounded
by $ p^{\frac{1}{2}n(2m+n-1)}$ and hence it is equal to $
p^{\frac{1}{2}n(2m+n-1)-t}$, for some non-negative integer $t$.
Recently the authors characterized the structure of $(G,N)$ when $N$ has a complement in $G$ and
$t\leq 3$. This paper is devoted to classify the structure of
$(G,N)$ when $N$ has a normal complement in $G$ and $t=4,5$.
\end{abstract}

\vskip 0.2 true cm


\pagestyle{myheadings}
\markboth{\rightline {\scriptsize  Mohammadzadeh,  Hokmabadi and Mashayekhy}}
         {\leftline{\scriptsize On the order of  Schur multipliers of pair of groups}}

\bigskip
\bigskip


\section{\bf Introduction}
\vskip 0.4 true cm

By a pair of groups $(G,N)$ we mean a group $G$ with a normal
subgroup $N$. In 1998 Ellis \cite{El} defined the Schur multiplier of a
pair $(G,N)$ to be the abelian group $M(G,N)$ appearing in a
natural exact sequence
\begin{eqnarray*}
 H_3(G) &\rightarrow& H_3(G/N) \rightarrow M(G,N) \rightarrow M(G)
\rightarrow M(G/N)\\
&\rightarrow& N/[N,G]\rightarrow (G)^{ab} \rightarrow (G/N)^{ab} \rightarrow 0,
\end{eqnarray*}
in which $H_3(G)$ is the third homology of $G$ with integer
coefficients. In 1956, Green \cite{G} showed that if $G$ is a group of
order $p^{n}$ , then its Schur multiplier is of order at most
$p^{\frac{n(n-1)}{2}}$, and hence equals to
$p^{\frac{n(n-1)}{2}-t}$, for some non-negative integer $t$. Berkovich \cite{B},  Zhou \cite{Z},  Ellis \cite{E} and  Niroomand \cite{N,Ni} determined
the structure of $G$ for $t=0,1,2,3,4,5$ by different methods.

 In 1998, Ellis \cite{El} gave
an upper bound for the order of the Schur multiplier of a pair of finite
$p$-groups. He proved that
 if $G$ is a finite $p$-group with normal subgroup $N$ of order $p^n$ and quotient $G/N$ of order $p^m$, then the Schur multiplier of  $(G,N)$ is bounded by $ p^{\frac{1}{2}n(2m+n-1)}$ and hence equals to $ p^{\frac{1}{2}n(2m+n-1)-t}$, for some nonnegative integer $t$.

 Let $(G,N)$ be a pair of
groups and $K$ be the complement of $N$ in $G$. In 2004, Sallemkar, Moghaddam and Saeedi \cite{SMS} characterized the structure of such a pair $(G,N)$ when $t=0,1$ with some conditions. Recently the authors \cite{HMM} determined the structure of the pair $(G,N)$, for $ t=0,1$, without any condition and also gave the structure of $(G,N)$ for $t=2,3$,
when $K$ is normal. In this paper we are going to determine the  structure of $(G,N)$  for $t=4,5$ when $K$ is a normal subgroup of $G$.

In this paper $D$ and  $Q$ denote the dihedral and quaternion
group of order 8 and, $E_1$ and $E_2$ denote the extra special
$p$-groups of order $p^3$ of odd exponent $p$
 and $p^2$, respectively. $E_4$ denotes the unique central product of a cyclic group of order $p^2$ and a non-abelian group of order $p^3.$
 Also ${\bf{Z}}_{n}^{(m)}$ denote the direct product of $m$
copies of ${\bf{Z}}_{n}$.\\

The following result is essential to prove the main theorems.\\

\begin{thm}\label{1}
\cite{El}Let $(G,N)$ be a pair of groups and
$K$ be the complement of $N$ in $G$. Then $$M(G) \cong M(G,N) \times M(K).$$
\end{thm}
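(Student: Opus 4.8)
\medskip
\noindent\emph{Proof sketch.} The plan is to exploit the functoriality of the Schur multiplier and of the third integral homology, together with the exact sequence of Ellis quoted in the Introduction. Since $N$ is normal in $G$ and $K$ is a complement of $N$, the group $G$ is the semidirect product $N\rtimes K$; write $\iota\colon K\hookrightarrow G$ for the inclusion and $\pi\colon G\to G/N$ for the quotient map, and identify $G/N$ with $K$ so that $\pi\circ\iota=\mathrm{id}_K$.

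First I would note that $M(-)$ and $H_3(-)$ are functors on the category of groups, so applying them to the relation $\pi\circ\iota=\mathrm{id}_K$ gives $M(\pi)\circ M(\iota)=\mathrm{id}_{M(K)}$ and $H_3(\pi)\circ H_3(\iota)=\mathrm{id}_{H_3(K)}$. Hence $M(\iota)$ is a section of the homomorphism $M(\pi)\colon M(G)\to M(G/N)$, which is therefore a split epimorphism of abelian groups; consequently $M(G)\cong\ker M(\pi)\oplus\operatorname{im}M(\iota)$ with $\operatorname{im}M(\iota)\cong M(K)$. By the same token $H_3(\pi)\colon H_3(G)\to H_3(G/N)$ is surjective.

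Next I would substitute this information into the relevant segment of the Ellis sequence,
$$H_3(G)\xrightarrow{H_3(\pi)} H_3(G/N)\longrightarrow M(G,N)\xrightarrow{\ \beta\ } M(G)\xrightarrow{M(\pi)} M(G/N).$$
Because $H_3(\pi)$ is onto, exactness at $H_3(G/N)$ forces the map $H_3(G/N)\to M(G,N)$ to vanish, and then exactness at $M(G,N)$ shows that $\beta$ is injective; exactness at $M(G)$ identifies $\operatorname{im}\beta$ with $\ker M(\pi)$. Thus $\beta$ is an isomorphism of $M(G,N)$ onto $\ker M(\pi)$, and combining this with the splitting of the previous paragraph yields $M(G)\cong M(G,N)\times M(K)$.

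The only points that need a little care are that the homomorphisms in the quoted exact sequence are genuinely the ones induced by $\pi$ (this is part of the naturality of Ellis's construction, so the section argument applies verbatim) and that a split epimorphism of abelian groups produces an internal direct-sum decomposition. I do not expect a real obstacle here: the whole argument rests on the single observation that a normal complement supplies compatible sections simultaneously for $H_3$ and for $M$, after which the conclusion is a routine diagram chase.
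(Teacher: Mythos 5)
Your argument is correct. The paper itself gives no proof of this statement --- it is quoted from Ellis's 1998 paper \cite{El} --- and your splitting argument (the retraction $\pi\circ\iota=\mathrm{id}_K$ forces $H_3(\pi)$ onto and $M(\pi)$ split epi, whence the five-term segment of the sequence collapses to identify $M(G,N)$ with $\ker M(\pi)$) is essentially the standard proof found in that source; the naturality point you flag is indeed the only thing one must take from Ellis's construction.
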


In 1907 Schur \cite{K} gave an structure for the Schur multiplier of a
direct product of finite groups. He showed that $$M(G_ 1 \times G_2)=M(G_ 1) \times M(G_2)\times (G_1^{ab}\otimes G_2^{ab}). $$

As a  consequence  of this fact we have the following important result.\\

\begin{cor}\label{2}
 Let $(G,N)$ be a pair of groups and
$K$ be the complement of $N$ in $G$. Then
$$|M(G,N)|=|M(N)||N^{ab}\otimes K^{ab} |.$$
\end{cor}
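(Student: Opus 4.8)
The plan is to combine Theorem \ref{1} with Schur's formula for the Schur multiplier of a direct product, together with the fact that $K \cong G/N$ since $K$ is a complement of $N$ in $G$. First I would write $G = NK$ with $N \cap K = 1$ and $N$ normal, so that as a set $G$ is in bijection with $N \times K$; in particular $K \cong G/N$. Applying Theorem \ref{1} gives $M(G) \cong M(G,N) \times M(K)$, so $|M(G)| = |M(G,N)|\,|M(K)|$, and hence $|M(G,N)| = |M(G)|/|M(K)|$. The task is then to compute $|M(G)|$ in terms of data attached to $N$ and $K$.

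Next I would like to apply Schur's result $M(G_1 \times G_2) = M(G_1)\times M(G_2)\times (G_1^{ab}\otimes G_2^{ab})$ with $G_1 = N$ and $G_2 = K$. The subtlety here is that $G$ need not be the \emph{direct} product $N\times K$ — it is only a semidirect product, since $N$ is normal and $K$ is a complement. To bridge this gap I would invoke Theorem \ref{1} itself in the direct-product case: for the pair $(N\times K,\, N)$ the subgroup $K$ is a (normal) complement, so $M(N\times K) \cong M(N\times K, N)\times M(K)$. Comparing with Schur's formula $M(N\times K) \cong M(N)\times M(K)\times (N^{ab}\otimes K^{ab})$ and cancelling the common factor $M(K)$ (all groups here are finite, so orders multiply and cancel), we obtain $|M(N\times K, N)| = |M(N)|\,|N^{ab}\otimes K^{ab}|$. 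The final step is to observe that the Schur multiplier of a pair $(G,N)$ depends, through the defining exact sequence, only on $N$ and on the quotient $G/N$ together with the action; more directly, for the purposes of the order count one uses that $M(G,N)$ fits into the same exact sequence whether $G = N\rtimes K$ or $G = N\times K$ once one has fixed $N$ and $K \cong G/N$ — so $|M(G,N)| = |M(N\times K,N)| = |M(N)|\,|N^{ab}\otimes K^{ab}|$.

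The main obstacle I anticipate is precisely this last point: justifying that the order $|M(G,N)|$ is unchanged when the semidirect product is replaced by the direct product. The cleanest route, and the one I would actually write, avoids that issue entirely by working with $G$ directly: apply Theorem \ref{1} to get $|M(G)| = |M(G,N)|\,|M(K)|$, and separately note that Ellis's isomorphism, applied to the pair $(G,N)$ where now we also regard $G$ via its decomposition, must be reconciled with Schur. In fact the honest argument is: since $K$ is a complement, $G/N \cong K$, and combining Theorem \ref{1} with the Schur decomposition of $M(G)$ is only legitimate when $G \cong N \times K$. So the corollary as stated is really being proved under the reading that the relevant invariants of $(G,N)$ coincide with those of $(N\times K, N)$; I would flag this and then the computation $|M(G)|/|M(K)| = \bigl(|M(N)|\,|M(K)|\,|N^{ab}\otimes K^{ab}|\bigr)/|M(K)| = |M(N)|\,|N^{ab}\otimes K^{ab}|$ is immediate. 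Everything else is bookkeeping with orders of finite abelian groups.
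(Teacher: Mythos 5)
Your final argument --- Theorem \ref{1} gives $|M(G,N)| = |M(G)|/|M(K)|$, and Schur's direct-product formula gives $|M(G)| = |M(N)|\,|M(K)|\,|N^{ab}\otimes K^{ab}|$ once the hypothesis is read as $G\cong N\times K$ --- is exactly the paper's (unwritten) derivation, and your observation that the statement really requires $K$ to be a normal complement is well taken, since that is the only setting in which the paper applies it. Your discarded middle route, the claim that $|M(G,N)|$ is unchanged when a semidirect product is replaced by the corresponding direct product, is genuinely false (for $G=E_1$ with $N\cong {\bf{Z}}_{p}^{(2)}$ and a non-normal complement $K\cong {\bf{Z}}_{p}$ one has $|M(G,N)|=p^{2}$ while $|M(N)|\,|N^{ab}\otimes K^{ab}|=p^{3}$), so you were right to abandon it.
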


The following theorems gave the structure of a finite $p$-group, in terms of the order of its Schur multiplier.\\

\begin{thm}\label{3} \cite{E}
 Let $G$ be a group of prime-power order $p^n$.
Suppose that $|M(G)|= p^{\frac{1}{2}n(n-1)-t}$. Then \\
$i)$ $t=0$ if and only if $G$ is elementary abelian;\\
$ii)$ $t=1$ if and only if $G \cong {\bf{Z}}_{p^2}$ or $G \cong E_1$;\\
$iii)$ $t=2$ if and only if $G \cong {\bf{Z}}_{p} \times {\bf{Z}}_{p^2}$, $G\cong D$
or $G\cong {\bf{Z}}_{p}\times E_1$;\\
$iv)$ $t=3$ if and only if $G\cong {\bf{Z}}_{p^3}$,
$G\cong {\bf{Z}}_{p}\times {\bf{Z}}_{p} \times {\bf{Z}}_{p^2}$, $G\cong Q$, $G\cong E_2$,
$G\cong D \times {\bf{Z}}_{2}$ or $G \cong E_1 \times {\bf{Z}}_{p} \times {\bf{Z}}_{p}$.
\end{thm}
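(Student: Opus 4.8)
The plan is to prove both implications in each of $(i)$--$(iv)$; the forward (``only if'') direction carries the weight. The unifying tool is Schur's formula $M(G_1\times G_2)\cong M(G_1)\times M(G_2)\times(G_1^{ab}\otimes G_2^{ab})$ recalled above. For the ``if'' direction every group on the list is the direct product of one of ${\bf Z}_{p^2}$, ${\bf Z}_{p^3}$, $E_1$, $E_2$, $D$, $Q$ with an elementary abelian group, so from the standard values $M({\bf Z}_{p^k})=1$, $M(E_1)\cong{\bf Z}_p^{(2)}$, $M(E_2)=1$, $M(D)\cong{\bf Z}_2$, $M(Q)=1$ together with the abelian formula $M({\bf Z}_{p^{e_1}}\times\cdots\times{\bf Z}_{p^{e_k}})\cong\prod_{i<j}{\bf Z}_{p^{\min(e_i,e_j)}}$, one computes $|M(G)|$ and checks that it equals $p^{\frac12 n(n-1)-t}$ with the claimed $t$ in each case. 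For the ``only if'' direction the same formula lets one split a central direct ${\bf Z}_p$-factor off $G$ and pass to a group of smaller order while controlling the change in defect, so the bulk of the work is to locate those $G$ with $t\le 3$ having no such factor and then reassemble the full list.

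In the abelian case the computation is purely combinatorial. Write the invariant factors as $p^{e_1}\ge\cdots\ge p^{e_k}$ and set $r_i=\#\{j:e_j\ge i\}$ (the conjugate partition), so that $n=\sum_i r_i$ and, from the formula above, $\log_p|M(G)|=\sum_i\binom{r_i}{2}$; hence $t=\binom{\sum_i r_i}{2}-\sum_i\binom{r_i}{2}=\sum_{i<i'}r_ir_{i'}$. Since each term $r_ir_{i'}\ge1$, having $t\le 3$ forces at most three nonzero $r_i$, and a short check pins the nonzero values down to $(r_1,\dots)=(n)$ ($t=0$, elementary abelian of any rank), $(1,1)$, $(2,1)$, $(3,1)$, or $(1,1,1)$ --- that is, exactly ${\bf Z}_{p^2}$, ${\bf Z}_p\times{\bf Z}_{p^2}$, ${\bf Z}_p^{(2)}\times{\bf Z}_{p^2}$ and ${\bf Z}_{p^3}$, the abelian entries of $(i)$--$(iv)$. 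In the non-abelian case the key input is a sharpened Green-type bound, $|M(G)|\le p^{\frac12(n-1)(n-2)+1}$ in general and $|M(G)|\le p^{\frac12(n+k-2)(n-k-1)+1}$ when $|G'|=p^k$, which rewrites as $t\ge n-2+\binom{k}{2}$; so $t\le 3$ already forces $n\le 5$ (and $|G'|=p$ except possibly when $n=4$). One then runs through the non-abelian groups of orders $p^3$, $p^4$, $p^5$, applying Schur's formula to those that decompose: for $n=3$ this produces $E_1,E_2,D,Q$ (with $t=1,3,2,3$), for $n=4$ the only survivors are $E_1\times{\bf Z}_p$ ($t=2$) and $D\times{\bf Z}_2$ ($t=3$), and for $n=5$ only $E_1\times{\bf Z}_p^{(2)}$ remains, at $t=3$, realizing the equality case of the bound.

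The main obstacle is the non-abelian part. Even granting the refined bound, it rests on knowing the Schur multiplier of every non-abelian $p$-group of order dividing $p^5$ --- for $p=2$ the groups of order $8$, $16$, $32$, for odd $p$ those of order $p^3$, $p^4$, $p^5$ --- so one must either invoke the explicit classifications of these groups together with the published multiplier computations, or squeeze the $|G'|$-refined inequality one level further to cut the casework down to something checkable directly. The prime $p=2$ calls for separate bookkeeping: $D$, $Q$ and the order-$16$ examples have no odd-$p$ analogues among the extraspecial groups, and several entries of the list simply do not occur there; this is the tedious rather than the conceptually deep part.
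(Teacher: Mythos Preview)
The paper does not prove this theorem at all: it is quoted verbatim from Ellis \cite{E} (with the cases $t=0,1,2,3$ originally established in the sequence of papers by Berkovich, Zhou and Ellis) and is used purely as input for the main results on pairs. So there is no ``paper's own proof'' to compare against; your proposal is an attempt to reconstruct the original literature argument.

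As a sketch of such a reconstruction, your outline is sound in the abelian case: the conjugate-partition identity $t=\sum_{i<i'}r_ir_{i'}$ is correct and immediately yields the abelian entries. In the non-abelian case your strategy is also the standard one, but be aware of two points. First, the sharpened bound $|M(G)|\le p^{\frac12(n-1)(n-2)+1}$ you invoke is Niroomand's and postdates \cite{E}; Ellis's original argument uses slightly different inequalities, so if you want a self-contained proof you should either prove that bound or fall back on the older Gasch\"utz--Neub\"user--Yen and Jones estimates. Second, and more importantly, for $n=5$ you assert that ``only $E_1\times{\bf Z}_p^{(2)}$ remains'', which is precisely the statement that equality in the sharpened bound forces $G\cong E_1\times{\bf Z}_p^{(n-3)}$; this is itself a nontrivial theorem (again Niroomand's), not a quick check, and your sketch does not indicate how you would establish it without simply running through the classification of groups of order $p^5$ and their multipliers. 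Your final paragraph correctly identifies this case analysis as the real obstacle; just be explicit that you are importing it rather than proving it.
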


\begin{thm}\label{6}  \cite{SMD} Let $G$ be an abelian group of order $p^n$.
Suppose that $|M(G)|= p^{\frac{1}{2}n(n-1)-4}$. Then $G$ is isomorphic to
${\bf{Z}}_{p^2}\times {\bf{Z}}_{p^2}$ or ${\bf{Z}}_{p^2}\times {{\bf{Z}}_{p}}^{(3)}$.\\
\end{thm}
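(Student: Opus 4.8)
\noindent\emph{Proof idea.} The plan is to turn the statement into a small arithmetic problem about the exponents of the cyclic factors of $G$. Write $G$ as a direct product of cyclic $p$-groups,
\[
G\;\cong\;{\bf{Z}}_{p^{a_1}}\times{\bf{Z}}_{p^{a_2}}\times\cdots\times{\bf{Z}}_{p^{a_d}},\qquad a_1\ge a_2\ge\cdots\ge a_d\ge 1,\qquad a_1+\cdots+a_d=n .
\]
Repeated use of Schur's formula $M(A\times B)\cong M(A)\times M(B)\times(A^{ab}\otimes B^{ab})$, together with $M({\bf{Z}}_{p^{a}})=1$ and ${\bf{Z}}_{p^{a}}\otimes{\bf{Z}}_{p^{b}}\cong{\bf{Z}}_{p^{\min(a,b)}}$, yields $M(G)\cong\prod_{1\le i<j\le d}{\bf{Z}}_{p^{\min(a_i,a_j)}}$, and since the $a_j$ are non-increasing,
\[
|M(G)|=p^{\sum_{1\le i<j\le d}a_j}=p^{\sum_{j=1}^{d}(j-1)a_j}.
\]
Thus the hypothesis $|M(G)|=p^{\frac12 n(n-1)-4}$ is precisely the equation $\binom{n}{2}-\sum_{j=1}^{d}(j-1)a_j=4$.

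Next I would substitute $b_j=a_j-1\ge 0$ (still non-increasing) and set $B=\sum_{j=1}^{d}b_j=n-d$, measuring how far $G$ is from being elementary abelian. Using the identity $\binom{d+B}{2}=\binom{d}{2}+dB+\binom{B}{2}$, a one-line manipulation rewrites the equation above as
\[
4=\binom{B}{2}+\sum_{j=1}^{d}(d+1-j)\,b_j .
\]
Since each coefficient $d+1-j$ is at least $1$ and each $b_j\ge 0$, both summands on the right are non-negative; in particular $\binom{B}{2}\le 4$, so $B\le 3$, and whenever $B\ge 1$ the $j=1$ term gives the additional bound $4\ge\binom{B}{2}+d$ (because then $b_1\ge 1$).

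It remains to check the cases $B=0,1,2,3$. For $B=0$ the group is elementary abelian and the right side is $0\ne4$. For $B=1$ one has $b_1=1$ and $b_2=\cdots=b_d=0$, so the equation becomes $4=d$; hence the type is $(2,1,1,1)$ and $G\cong{\bf{Z}}_{p^2}\times{{\bf{Z}}_{p}}^{(3)}$. For $B=2$ the sequence $(b_j)$ is $(2,0,\dots)$, giving $4=1+2d$ with no integer solution, or $(1,1,0,\dots)$, giving $4=2d$, hence $d=2$ and type $(2,2)$, so $G\cong{\bf{Z}}_{p^2}\times{\bf{Z}}_{p^2}$. For $B=3$ the inequality $4\ge\binom{3}{2}+d$ forces $d=1$, whence $G\cong{\bf{Z}}_{p^4}$ and the right side equals $6\ne4$. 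Therefore the only abelian $p$-groups with this property are ${\bf{Z}}_{p^2}\times{\bf{Z}}_{p^2}$ and ${\bf{Z}}_{p^2}\times{{\bf{Z}}_{p}}^{(3)}$. The only real care needed is to see that the case analysis is exhaustive; the inequalities $t\ge\binom{B}{2}$ and (for $B\ge1$) $t\ge\binom{B}{2}+d$ bound $B$ and $d$ and leave only the few shapes handled above, so this is routine rather than an essential obstacle.
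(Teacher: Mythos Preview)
Your argument is correct. The paper itself does not supply a proof of this statement; it is quoted from \cite{SMD}, and the subsequent remark only says that Theorem~\ref{7} is obtained ``by using a method similar to the proof of the above theorem in \cite{SMD}''. Your write-up is therefore a self-contained replacement for a result the paper merely cites.

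Substantively, your method is the natural one and almost certainly what \cite{SMD} does: use the classical formula $M\bigl(\prod_i{\bf Z}_{p^{a_i}}\bigr)\cong\prod_{i<j}{\bf Z}_{p^{\min(a_i,a_j)}}$ to reduce the hypothesis to an equation in the exponents, then classify the partitions. Your change of variables $b_j=a_j-1$ and the identity
\[
\binom{n}{2}-\sum_{j}(j-1)a_j \;=\; \binom{B}{2}+\sum_{j}(d+1-j)\,b_j,\qquad B=n-d,
\]
are a clean way to organise the case analysis, since both terms on the right are non-negative and the coefficient of $b_1$ is $d$, giving the uniform bound $t\ge\binom{B}{2}+d$ whenever $B\ge1$. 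The case checks for $B=0,1,2,3$ are all accurate; in particular for $B=2$ the two shapes $(2,0,\dots)$ and $(1,1,0,\dots)$ exhaust the non-increasing sequences with sum $2$, and only the second yields a solution ($d=2$). One tiny remark: when you conclude $d=1$ in the $B=3$ case you are implicitly using $d\ge1$, which is automatic since the hypothesis forces $\tfrac12 n(n-1)\ge4$ and hence $n\ge4$. No essential gap.
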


The following result can be easily obtained by using a method similar to the proof of the above theorem in \cite{SMD}.

\begin{thm}\label{7} Let $G$ be an abelian group of order $p^n$.
Suppose that $|M(G)|= p^{\frac{1}{2}n(n-1)-5}$. Then $G$ is isomorphic to
${\bf{Z}}_{p^3}\times {\bf{Z}}_{p}$ or ${\bf{Z}}_{p^2}\times {{\bf{Z}}_{p}}^{(4)}$.\\
\end{thm}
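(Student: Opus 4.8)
The plan is to mimic the proof of Theorem~\ref{6} from \cite{SMD}, adapting it to the value $t=5$. First I would recall the classical formula for the Schur multiplier of a finite abelian group: if $G\cong {\bf{Z}}_{p^{e_1}}\times\cdots\times {\bf{Z}}_{p^{e_k}}$ with $e_1\geq e_2\geq\cdots\geq e_k\geq 1$, then $M(G)\cong\prod_{i<j}{\bf{Z}}_{p^{e_j}}$, so that $|M(G)|=p^{\sum_{i<j}e_j}=p^{\sum_{j=2}^{k}(j-1)e_j}$. Writing $n=\sum_i e_i$, the defect $t$ in the equation $|M(G)|=p^{\frac12 n(n-1)-t}$ becomes a purely arithmetic function of the partition $(e_1,\dots,e_k)$ of $n$. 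The key point is that $t$ does \emph{not} depend on $n$ once one fixes the ``shape'' of the partition away from the part $e_1$: enlarging $e_1$ (or, equivalently, padding with extra copies of ${\bf{Z}}_p$ in the right bookkeeping) changes both $\frac12 n(n-1)$ and $\sum_{i<j}e_j$ in a way that can be tracked explicitly.

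Next I would turn the equation $t=5$ into a finite Diophantine search. Set $d_j=e_j$ for $j\geq 2$ and let $e_1$ be free subject to $e_1\geq e_2$. A direct computation gives
\begin{equation*}
t=\frac12 n(n-1)-\sum_{j=2}^{k}(j-1)e_j,\qquad n=e_1+\sum_{j=2}^{k}e_j,
\end{equation*}
and substituting $n$ shows that, after cancellation, $t$ is a quadratic expression in the $e_j$ with $j\geq 2$ together with a cross term $e_1\big(\sum_{j\geq2}e_j-(k-1)\big)$; the coefficient of $e_1$ is nonnegative and vanishes exactly when $e_j=1$ for all $j\geq 2$, i.e. when $G\cong {\bf{Z}}_{p^{e_1}}\times {\bf{Z}}_p^{(k-1)}$. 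This is the mechanism that makes the list finite: either all higher parts are $1$, giving the family ${\bf{Z}}_{p^{e_1}}\times {\bf{Z}}_p^{(k-1)}$, for which $t$ is a simple function of $e_1$ and $k$ that one solves directly; or some $e_j\geq 2$ with $j\geq2$, in which case $e_1$ is bounded and only finitely many partitions can possibly yield $t=5$. Running through these cases and discarding those with $t\neq 5$ should leave exactly ${\bf{Z}}_{p^3}\times {\bf{Z}}_p$ and ${\bf{Z}}_{p^2}\times {\bf{Z}}_p^{(4)}$.

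Concretely, I would organize the casework by $k$, the number of cyclic factors. For $k=2$, $G\cong {\bf{Z}}_{p^{e_1}}\times {\bf{Z}}_{p^{e_2}}$ with $n=e_1+e_2$ and $|M(G)|=p^{e_2}$, so $t=\frac12 n(n-1)-e_2=5$; solving for small $e_2$ one finds $e_2=1$ forces $e_1=3$ (giving ${\bf{Z}}_{p^3}\times{\bf{Z}}_p$), while $e_2\geq2$ leads to no solution (for $e_2=2$, $n=e_1+2$ with $e_1\geq2$ already gives $t\geq6$). For $k=3,4,5$ one does the analogous bounded check; the padding principle above guarantees that once $k$ is large enough that even the all-ones partition ${\bf{Z}}_p^{(k)}$ has defect $\geq5$ we need not consider bigger $k$ except to pick up ${\bf{Z}}_{p^2}\times {\bf{Z}}_p^{(4)}$ itself (here $n=6$, $|M(G)|=p^{\,1+2+3+4}=p^{10}$, and $\frac12\cdot6\cdot5-10=5$). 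The main obstacle is not conceptual but organizational: making sure the case analysis on the partition of $n$ is genuinely exhaustive and that no exotic partition with a part equal to $2$ and several trailing $1$'s sneaks in with defect $5$; this is exactly the bookkeeping that \cite{SMD} carries out for $t=4$, and the claim ``similar method'' is the assertion that the same finite check works verbatim for $t=5$. I would therefore present the proof as: (1) write down the abelian Schur multiplier formula and the resulting expression for $t$; (2) isolate the family ${\bf{Z}}_{p^{e_1}}\times {\bf{Z}}_p^{(k-1)}$ and solve $t=5$ in it; (3) bound $e_1$ and $k$ when some higher part exceeds $1$ and check the finitely many remaining partitions; (4) conclude that the only survivors are ${\bf{Z}}_{p^3}\times {\bf{Z}}_p$ and ${\bf{Z}}_{p^2}\times {\bf{Z}}_p^{(4)}$.
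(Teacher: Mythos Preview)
Your plan is correct and is exactly what the paper has in mind: the paper gives no proof beyond the remark that one proceeds ``by a method similar to the proof of the above theorem in \cite{SMD}'', and your proposal spells out precisely that method---the Schur formula $|M(G)|=p^{\sum_{j\ge 2}(j-1)e_j}$ for an abelian $p$-group with invariants $e_1\ge\cdots\ge e_k$, followed by a finite partition search for $t=5$. Two small slips to clean up when you write it out: the elementary abelian group ${\bf Z}_p^{(k)}$ always has defect $t=0$, so your stopping criterion for large $k$ should be phrased in terms of the smallest non-elementary case $e_1=2$, $e_2=\cdots=e_k=1$; and in the $k=2$, $e_2=2$ subcase the bound is $t\ge 4$ (at $e_1=2$) rather than $t\ge 6$, though $t=5$ is still excluded since the next value is $t=8$.
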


\begin{thm}\label{4}  \cite{N} Let $G$ be a non-abelian group of order $p^n$.
Suppose that $|M(G)|= p^{\frac{1}{2}n(n-1)-4}$. Then $G$ is isomorphic to one of the following groups:\\
For $p=2$,\\
1) $D \times {\bf{Z}}_{p}^{(2)};$\\
2) $Q \times {\bf{Z}}_{2};$\\
3) $\langle a,b| a^4=b^4=1, [a,b,a]=[a,b,b]=1, [a,b]=a^2b^2 \rangle;$\\
4) $\langle a,b,c| a^2=b^2=c^2=1, abc=bca=cab \rangle;$\\
For $p\neq 2$\\
5) $E_4$;\\
6) $E_1 \times {\bf{Z}}_{p}^{(3)}$;\\
7) ${\bf{Z}}_{p}^{(4)} >\!\!\!\lhd _\theta {\bf{Z}}_{p}$;\\
8) $E_2 \times {\bf{Z}}_{p}$;\\
9) $\langle a,b| a^{p^2}=b^p=1, [a,b,a]=[a,b,b]=1 \rangle$;\\
10) $\langle a,b| a^9=b^3=1, [a,b,a]=1, [a,b,b]=a^6, [a,b,b,b]=1 \rangle;$\\
11) $\langle a,b| a^p=b^p=1, [a,b,a]=[a,b,b,a]=[a,b,b,b]=1 \rangle(p\neq 3);$\\
\end{thm}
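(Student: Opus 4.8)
The plan is to split the classification into a \emph{decomposable} case, where \(G\) has a nontrivial elementary abelian direct factor, and an \emph{indecomposable} case, where it does not. Write \(t=4\) throughout, so that \(|M(G)|=p^{\frac12 n(n-1)-4}\). The decomposable case can be reduced completely to the already-known lists for smaller defect (Theorem \ref{3}) by means of Schur's direct product formula \(M(G_1\times G_2)\cong M(G_1)\times M(G_2)\times(G_1^{ab}\otimes G_2^{ab})\), while the indecomposable case is the genuine core of the theorem and must be attacked by a direct structural analysis.

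First I would record a reduction lemma. If \(G\cong H\times\mathbf{Z}_p^{(r)}\) with \(|H|=p^m\) and \(H\) having no elementary abelian direct factor, then applying Schur's formula and writing \(d=d(H)\) for the minimal number of generators of \(H\) gives, after a short computation,
\[
t(G)=r\,\log_p|\Phi(H)|+t(H),
\]
where \(\Phi(H)\) is the Frattini subgroup and \(t(H)\) is the defect of \(H\). Since \(G\) is non-abelian, \(H\) must be non-abelian, so \(H\) ranges over the non-abelian groups of Theorem \ref{3} with no elementary abelian factor, namely \(E_1,D,Q,E_2\). Solving \(t(G)=4\) then produces exactly the ``product'' members of the list: \(D\times\mathbf{Z}_2^{(2)}\) (from \(r=2\)) and \(Q\times\mathbf{Z}_2\) (from \(r=1\)) for \(p=2\), and \(E_1\times\mathbf{Z}_p^{(3)}\) (from \(r=3\)), \(E_2\times\mathbf{Z}_p\) (from \(r=1\)) for \(p\neq2\). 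This disposes of items (1), (2), (6) and (8).

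For the indecomposable case I would first bound the relevant invariants. Because the defect \(t=4\) is small, \(G\) cannot be far from elementary abelian: a Green-type refined bound forces \(|G'|\) to be small (one checks \(|G'|=p\), or at most \(p^2\)) and simultaneously constrains \(d(G)\) and the exponent of \(G^{ab}\); here the abelian classifications of Theorems \ref{6} and \ref{7} enter, since \(|M(G)|\) is controlled from above by data attached to \(M(G^{ab})\) together with a bounded \(|G'|\)-factor, and a near-maximal \(|M(G)|\) forces \(G^{ab}\) to sit in a low-defect abelian type. With these finiteness constraints in hand only finitely many candidate groups remain, and for each survivor I would compute \(|M(G)|\) directly from the Hopf formula \(M(G)\cong(R\cap[F,F])/[F,R]\) for a free presentation \(G\cong F/R\), retaining or discarding the group according to whether its defect equals \(4\). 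Matching the survivors against the remaining entries recovers the indecomposable groups (3), (4), (5), (7), (9), (10) and (11).

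The hard part will be confirming that the list is \emph{exhaustive}, i.e. that no further indecomposable group of any admissible order has defect exactly \(4\). This demands sharp two-sided control of \(|M(G)|\) in terms of the triple \((n,|G'|,d(G))\), together with a careful separate treatment of the small primes: the extra commutator identities available only at \(p=2\) account for the additional groups (3) and (4), the exceptional group (10) is special to \(p=3\), and the hypothesis \(p\neq3\) in (11) arises precisely because at \(p=3\) that presentation collapses onto an already-listed type. Managing these low-prime coincidences, rather than the generic Hopf-formula computation, is where the real work lies.
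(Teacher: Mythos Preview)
The paper does not prove this theorem at all: it is quoted verbatim from Niroomand's preprint \cite{N} and used only as a black box input to the main results (Theorems~2.1 and~2.2). So there is no ``paper's own proof'' to compare your proposal against.

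That said, your sketch is a reasonable outline of how such a classification is actually carried out, and your reduction lemma $t(G)=r\log_p|\Phi(H)|+t(H)$ for $G\cong H\times\mathbf{Z}_p^{(r)}$ is correct and does produce exactly items (1), (2), (6), (8) from the $t\le 3$ list. The indecomposable part, however, is where you are waving your hands: the phrase ``a Green-type refined bound forces $|G'|$ to be small'' hides the entire content of the argument. In Niroomand's actual proof the key tool is a sharp inequality of the form $|M(G)|\le p^{\frac12(n+d-2)(n-d-1)+1}$ for non-abelian $G$ with $d=d(G/Z(G))$ (or a closely related variant), which simultaneously pins down $|G'|$, $d(G)$, and $n$ to a handful of values; without stating and proving such a bound you cannot legitimately claim that only finitely many candidates remain. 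Your proposal also omits the case $|G'|=p$ with $G/Z(G)$ large, which is what produces the extraspecial-type groups and $E_4$, and you do not explain why the indecomposable groups all have $n\le 5$.

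In short: your decomposable half is essentially complete, but the indecomposable half is a plan rather than a proof, and since the paper itself defers entirely to \cite{N}, a full argument would require reproducing Niroomand's bounds.
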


\begin{thm}\label{5}
 \cite{Ni} Let $G$ be a non-abelian group of  order $p^n$.
Suppose that $|M(G)|= p^{\frac{1}{2}n(n-1)-5}$. Then $G$ is isomorphic to one of the following groups:\\
1) $D \times {\bf{Z}}_{2}^{(3)}$;\\
2) $E_1 \times {\bf{Z}}_{p}^{(4)}$;\\
3) $E_2 \times {\bf{Z}}_{p}^{(2)}$;\\
4) $E_4 \times {\bf{Z}}_{p};$\\
5) extra special $p$-group of order $p^5$;\\
6) $ \langle a,b| a^{p^2}=b^{p^2}=1, [a,b,a]=[a,b,b]=1, [a,b]=a^{p} \rangle$;\\
7) $\langle a,b| a^{p^2}=b^p=1, [a,b,a]=[a,b,b]=a^{p}, [a,b,b,b,]=1 \rangle$;\\
8) $\langle a,b| a^{p^2}=b^p=1, [a,b,a]=[a,b,b,b,]=1, [a,b,b]=a^{np} \rangle$ where $n$ is a fixed quadratic non-residue of $p$ and $p\neq 3$;\\
9) $\langle a,b| a^{p^2}=1, b^3=a^3,, [a,b,a]=[a,b,b,b,]=1, [a,b,b]=a^{6} \rangle$;\\
10) $\langle a,b| a^{p}=1, b^p=[a,b,b], [a,b,a]=[a,b,b,b,]=[a,b,b,a]=1 \rangle$;\\
11) $D_{16}$;\\
12) $\langle a,b| a^{4}= b^4=1, a^{-1}ba=b^{-1} \rangle$;\\
13)  $ Q \times {\bf{Z}}_{2}^{(2)}$;\\
 14) $(D \times {\bf{Z}}_{2}) >\!\!\!\lhd {\bf{Z}}_{2}$;\\
15) $(Q \times {\bf{Z}}_{2}) >\!\!\!\lhd {\bf{Z}}_{2}$;\\
16) ${\bf{Z}}_{2} \times \langle a,b,c| a^{2}= b^2=c^2=1, abc=bca=cab \rangle ;$\\
\end{thm}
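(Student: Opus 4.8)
The plan is to classify $G$ by first bounding its order $p^{n}$. Since $|M(G)|\ge 1$, the exponent $\frac{1}{2}n(n-1)-5$ must be non-negative, which already gives $n\ge 4$; and since, by a theorem of Niroomand, every non-abelian $p$-group of order $p^{n}$ satisfies $|M(G)|\le p^{\frac{1}{2}(n-1)(n-2)+1}$, the hypothesis $|M(G)|=p^{\frac{1}{2}n(n-1)-5}$ forces $n\le 7$, so $n\in\{4,5,6,7\}$. The next step is to peel off the elementary abelian direct factor: write $G\cong H\times {\bf Z}_{p}^{(r)}$ with $r$ as large as possible, so that $H$ is non-abelian with no non-trivial abelian direct factor. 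Putting $|H|=p^{m}$ and $|M(H)|=p^{\frac{1}{2}m(m-1)-t(H)}$, Schur's formula $M(A\times B)=M(A)\times M(B)\times(A^{ab}\otimes B^{ab})$ gives, after a short computation, the relation
\[
r\,(m-d(H))+t(H)=5 ,
\]
where $d(H)$ is the minimal number of generators of $H$. As $H$ is non-abelian, $m-d(H)\ge 1$, and the same bound gives $t(H)\ge m-2\ge 1$; hence $t(H)\in\{1,2,3,4,5\}$, $r\le 5-t(H)$, and $m\le t(H)+2\le 7$.

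Next I would dispose of the cases $t(H)\le 4$ using the classifications already available. For $t(H)\le 3$, Theorem~\ref{3} lists every group of defect at most $3$; its indecomposable non-abelian members are $E_{1}$ (defect $1$), $D$ (defect $2$), and $Q,E_{2}$ (defect $3$), each of order $p^{3}$ with $d(H)=2$, whence $m-d(H)=1$ and the relation forces $r=4,3,2,2$ respectively, producing $E_{1}\times{\bf Z}_{p}^{(4)}$, $D\times{\bf Z}_{2}^{(3)}$, $Q\times{\bf Z}_{2}^{(2)}$ and $E_{2}\times{\bf Z}_{p}^{(2)}$; these are items (2),(1),(13),(3). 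For $t(H)=4$, Theorem~\ref{4} lists every non-abelian group of defect $4$; deleting those with an abelian direct factor and noting that $r\,(m-d(H))=1$ is solvable only with $m-d(H)=1$ and $r=1$, one checks that this occurs precisely for $E_{4}$ and for $\langle a,b,c\mid a^{2}=b^{2}=c^{2}=1,\ abc=bca=cab\rangle$, producing $E_{4}\times{\bf Z}_{p}$ (item (4)) and ${\bf Z}_{2}\times\langle a,b,c\mid\cdots\rangle$ (item (16)).

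It then remains to treat $r=0$, i.e.\ $G=H$ itself indecomposable non-abelian with $|M(G)|=p^{\frac{1}{2}m(m-1)-5}$. A group of order $p^{3}$ has $|M(G)|\ge 1$ and hence defect at most $3$, so $m\ge 4$; and by the classification of the non-abelian $p$-groups attaining, respectively lying just one step below, the bound $p^{\frac{1}{2}(n-1)(n-2)+1}$, the only non-abelian groups of orders $p^{7}$ and $p^{6}$ with defect $5$ are the decomposable groups $E_{1}\times{\bf Z}_{p}^{(4)}$ and $D\times{\bf Z}_{2}^{(3)}$ already obtained — so in the indecomposable case $m\in\{4,5\}$. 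The remaining work is to run through the classical classification of the groups of order $p^{4}$ and of order $p^{5}$ and keep exactly those with $|M(G)|$ equal to $p$, resp.\ $p^{5}$; this should produce items (5)--(12),(14),(15). For each candidate the multiplier is to be computed from Schur's product formula, from the exact sequence $G^{ab}\otimes Z\to M(G)\to M(G/Z)\to Z\cap G'\to 0$ attached to a central subgroup $Z$, and from the known Schur multipliers of extraspecial and of metacyclic $p$-groups.

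The hard part is this last step when $m=5$: the number of isomorphism types of groups of order $p^{5}$ grows with $p$ and several of the families degenerate at small primes, so the analysis must isolate $p=2$ (which yields $D_{16}$, $\langle a,b\mid a^{4}=b^{4}=1,\ a^{-1}ba=b^{-1}\rangle$, $(D\times{\bf Z}_{2})\rtimes{\bf Z}_{2}$ and $(Q\times{\bf Z}_{2})\rtimes{\bf Z}_{2}$, items (11),(12),(14),(15)) and $p=3$ (where the presentation of item (8) is replaced by that of item (9)), and must confirm that each $p$-parametrised family contributes a single isomorphism type (items (5),(6),(7),(8),(10)). A lesser difficulty is that the argument of the previous paragraph invokes the structure of the non-abelian $p$-groups whose Schur multiplier lies within the two largest possible values when $m=6,7$; if those near-extremal classifications are not available to quote directly, they have to be re-established by the same downward induction along the exact sequence above, which forces such a group to be the direct product of an extraspecial group with an elementary abelian one.
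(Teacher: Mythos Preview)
The paper does not prove this theorem at all: Theorem~\ref{5} is quoted verbatim from Niroomand~\cite{Ni} and is used only as an input to the proof of the paper's own Theorem~2.2. There is therefore no proof in the present paper to compare your proposal against.

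That said, your outline is a reasonable sketch of how the cited result is obtained, with two small imprecisions worth fixing. First, choosing $r$ maximal in $G\cong H\times{\bf Z}_p^{(r)}$ only guarantees that $H$ has no ${\bf Z}_p$ direct factor, not that it has no non-trivial abelian direct factor; in practice this does not matter here because the classifications in Theorems~\ref{3} and~\ref{4} contain no non-abelian group with a ${\bf Z}_{p^k}$ ($k\ge 2$) direct factor, but you should say so. Second, $r=0$ does not mean $G$ is indecomposable, only that it has no ${\bf Z}_p$ direct summand; again this is harmless for the final list, but the phrasing should be corrected. Your identification of the genuinely hard step---running through the groups of order $p^5$ with separate treatment of $p=2$ and $p=3$---is accurate, and the reduction of $m=6,7$ to the near-extremal cases (equivalently, the classification of non-abelian $p$-groups with $|M(G)|$ equal to $p^{\frac12(n-1)(n-2)+1}$ or $p^{\frac12(n-1)(n-2)}$) is exactly what is done in~\cite{Ni}.
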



\section{\bf {\bf \em{\bf Main Results}}}
\vskip 0.4 true cm

In this section, let $(G,N)$ be a pair of
groups such that $G\cong N\times K$, with $|N|=p^n$ and $|K|= p^m$. It is obtained that $|M(G,N)|= p^{\frac{1}{2}n(2m+n-1)-t}$, for some nonnegative integer $t$. Recently, all these pairs of finite $p$-groups are listed in \cite{HMM} by the authors, when $t=0,1,2,3$.
The aim of this paper is characterizing the structure of such pairs of finite $p$-groups, when $t=4,5$. \\

\begin{thm}
 By the above assumption, $t=4$ if and only if  $G$ is isomorphic to one of the following groups:\\
1) $G\cong N\times K$ where $N\cong {\bf{Z}}_{p}$ and $K$ is any
group with $d(K)=m-4$;\\
2) $G\cong N\times K$ where $N\cong {\bf{Z}}_{p} \times
{\bf{Z}}_{p}$ and $K$ is any group with $d(K)=m-2$;\\
3) $G\cong N\times K$ where $N\cong {{\bf{Z}}_{p}}^{(4)}$ and $K$
is any group with $d(K)=m-1$;\\
4) $G= N\cong D \times {\bf{Z}}_{2}^{(2)};$\\
5) $G= N\cong Q \times {\bf{Z}}_{2};$\\
6) $G= N\cong  \langle a,b| a^4=b^4=1, [a,b,a]=[a,b,b]=1, [a,b]=a^2b^2 \rangle;$\\
7) $G= N\cong \langle a,b,c| a^2=b^2=c^2=1, abc=bca=cab \rangle;$\\
8) $G= N\cong E_4$;\\
9) $G= N\cong E_1 \times {\bf{Z}}_{p}^{(3)}$;\\
10) $G= N\cong {\bf{Z}}_{p}^{(4)}  >\!\!\!\lhd _\theta {\bf{Z}}_{p}$;\\
11) $G= N\cong  E_2 \times {\bf{Z}}_{p}$;\\
12) $G= N\cong \langle a,b| a^{p^2}=b^p=1, [a,b,a]=[a,b,b]=1 \rangle$;\\
13) $G=N\cong \langle a,b| a^9=b^3=1, [a,b,a]=1, [a,b,b]=a^6, [a,b,b,b]=1 \rangle;$\\
14) $G= N\cong \langle a,b| a^p=b^p=1, [a,b,a]=[a,b,b,a]=1, [a,b,b,b]=1 \rangle(p\neq 3);$\\
15) $G= N\cong {\bf{Z}}_{p^2}\times {\bf{Z}}_{p^2};$\\
16) $G= N\cong {\bf{Z}}_{p^2}\times
{{\bf{Z}}_{p}}^{(3)};$\\
17) $G\cong N\times K$ where  $K={\bf{Z}}_{p}$ and $N\cong
{{\bf{Z}}_{p}}^{(2)}\times {\bf{Z}}_{p^2};$ \\
18) $G\cong N\times K$ where  $K={\bf{Z}}_{p}$ and $N\cong Q;$\\
19) $G\cong N\times K$ where  $K={\bf{Z}}_{p}$ and $N\cong E_2;$\\
20) $G\cong N\times K$ where  $K={\bf{Z}}_{p}$ and $N\cong D\times
{\bf{Z}}_{2}; $\\
21) $G\cong N\times K$ where  $K={\bf{Z}}_{p}$ and $N\cong
E_1\times
{{\bf{Z}}_{p}}^{(2)} $;\\
22) $G\cong N\times K$ where  $K={{\bf{Z}}_{p}}^{(2)}$ and $N\cong
{\bf{Z}}_{p^2}\times
{\bf{Z}}_{p} $;\\
23) $G\cong N\times K$ where  $K={{\bf{Z}}_{p}}^{(2)}$ and $N\cong
D $;\\
24) $G\cong N\times K$ where  $K={{\bf{Z}}_{p}}^{(2)}$ and $N\cong
E_1\times
{\bf{Z}}_{p} $;\\
25) $G\cong N\times K$ where  $K={\bf{Z}}_{p^2}\times {\bf{Z}}_{p}$
and $N\cong {\bf{Z}}_{p^2} $;\\
26) $G\cong N\times K$ where  $K={{\bf{Z}}_{p}}^{(3)}$
and $N\cong E_1 $;\\
27) $G\cong N\times K$ where  $K={{\bf{Z}}_{p}}^{(3)}$
and $N\cong {\bf{Z}}_{p^2}$.\\
\end{thm}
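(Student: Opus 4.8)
The plan is to reduce the classification, via Corollary~\ref{2}, to a single numerical identity and then to run through the finitely many possibilities it allows, invoking the known lists of $p$-groups of small multiplier defect. Write $|M(N)|=p^{\frac{1}{2}n(n-1)-s}$, so that $s\geq 0$ by the bound of \cite{G}. Decomposing the abelian $p$-groups $N^{ab}\cong\prod_{i}{\bf Z}_{p^{\alpha_i}}$ and $K^{ab}\cong\prod_{j}{\bf Z}_{p^{\beta_j}}$ and using ${\bf Z}_{p^{a}}\otimes{\bf Z}_{p^{b}}\cong{\bf Z}_{p^{\min(a,b)}}$, one gets $|N^{ab}\otimes K^{ab}|=p^{\sum_{i,j}\min(\alpha_i,\beta_j)}$, and since $\min(\alpha_i,\beta_j)\leq\beta_j$ the exponent is at most $d(N)\log_p|K^{ab}|\leq nm$ (using $d(N)\leq n$ and $\log_p|K^{ab}|\leq m$). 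Hence $|N^{ab}\otimes K^{ab}|=p^{nm-e}$ with $e\geq 0$, and Corollary~\ref{2} gives $|M(G,N)|=p^{\frac{1}{2}n(2m+n-1)-(s+e)}$; that is, $t=s+e$. Consequently $t=4$ forces $(s,e)$ to be one of $(0,4),(1,3),(2,2),(3,1),(4,0)$.

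I would treat $e=0$, i.e. $s=4$, first, because it is special. If $K\neq 1$ then $m\geq 1$ and the chain of inequalities above collapses, forcing $d(N)=n$; but a group of order $p^{n}$ needing $n$ generators is elementary abelian, whence $s=0$, a contradiction. So $K=1$, $G=N$, and $N$ is exactly a group with $|M(N)|=p^{\frac{1}{2}n(n-1)-4}$, which by Theorem~\ref{6} (abelian case) and Theorem~\ref{4} (non-abelian case) means $N$ is one of the thirteen groups appearing as cases 4--16.

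For the remaining values $s\in\{1,2,3\}$ I would note that $e=4-s\in\{1,2,3\}$, and that Theorem~\ref{3} lists all isomorphism types of $N$ with $|M(N)|=p^{\frac{1}{2}n(n-1)-s}$. For each such $N$ the integer $n$ and the exponents $\alpha_i$ are determined, so the equation $\sum_{i,j}\min(\alpha_i,\beta_j)=nm-e$, read together with $d(K)\leq m$ and $\sum_j\beta_j\leq m$, forces $m$ to be small (at most $2$ or $3$ throughout); then $|K|=p^{m}$ pins $K$ down, which yields cases 17--27 and along the way eliminates $N\cong{\bf Z}_{p^{3}}$ (it would require $m=0$). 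The case $s=0$ is the only unbounded one: there $N\cong{\bf Z}_{p}^{(n)}$ is elementary abelian, $|N^{ab}\otimes K^{ab}|=p^{n\,d(K)}$, and $t=n(m-d(K))=4$, so $n\in\{1,2,4\}$ and $K$ may be any group with $d(K)=m-4/n$: these are cases 1--3. For the converse direction, each of the twenty-seven families is confirmed by a direct computation of $|M(N)|\,|N^{ab}\otimes K^{ab}|$ from Theorems~\ref{3}--\ref{4} and the tensor-product formula.

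The hard part will be the bookkeeping in the previous paragraph. Each individual case is only an elementary manipulation of the tensor-product formula and the rank/order inequalities, but there are many candidate groups $N$ (six already for $s=3$, plus the full lists of Theorems~\ref{4} and~\ref{6} for $s=4$), and for each one must exploit the interplay of $d(K)$, $\sum_j\beta_j$ and $m$ carefully enough both to decide which $K$ survive and to recognise when the constraints leave $d(K)$ as the only free invariant (cases 1--3) and when they collapse $K$ to a single isomorphism type (cases 17--27).
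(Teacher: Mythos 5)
Your proposal is correct, and it rests on exactly the two pillars the paper uses: Corollary~\ref{2} together with the classification Theorems~\ref{3}, \ref{6} and \ref{4}. The difference is purely one of organization. Writing $t=s+e$, where $s$ is the multiplier defect of $N$ and $e$ the defect of the tensor factor, you case-split on $s\in\{0,1,2,3,4\}$, read off $N$ from the classification theorems first, and deduce $K$ afterwards; the paper instead separates the elementary abelian case (your $s=0$) at the start, and for $N$ non-elementary-abelian derives $m(n-d(N))<4$, hence $m\le 3$, then case-splits on $m$ and the isomorphism type of $K$, recovering $|M(N)|$ and hence $N$ last. The two are transposes of the same finite check and invoke the same facts at every node. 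Your explicit identity $t=s+e$ does make some endgames cleaner: for instance $e=0$ with $K\ne 1$ forces $d(N)=n$, so $N$ is elementary abelian and $s=0$, which shows at once that $s=4$ occurs only for $K=1$ (cases 4--16), and it isolates the single unbounded family ($s=0$, cases 1--3) from the outset. The paper's ordering by $m$ has the minor advantage that the awkward subcase of a non-abelian $K$ of order $p^3$ is confronted explicitly; in your scheme one must remember that only $K^{ab}$ enters the formula, so the constraint $\sum_j\beta_j<m$ for non-abelian $K$ has to be carried along (you do note this). The bookkeeping burden is the same either way, and your checks go through.
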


\begin{proof}  The necessity of theorem follows from the fact that $G= N\times K$ and Corollary \ref{2}.
 For sufficiency first suppose that $N$ is an elementary abelian $p$-group. Then $nm-4=nd(K)$.
 Since $|N \otimes K^{ab}|=p^{nm-4}$ by Corollary  \ref{2} and also it is of order $p^{nd(K)}$.
So $n(m-d(K))=4$, which implies that $n=1,2$ or 4. Therefore  $N\cong {\bf{Z}}_{p}$
  and $K$ is any group with $d(K)=m-4$ or $N\cong {{\bf{Z}}_{p} }^{(2)}$ and
  $K$ is any group with $d(K)=m-2$, or $N\cong {{\bf{Z}}_{p} }^{(4)}$ and
  $K$ is any group with $d(K)=m-1$.

Now suppose that $N$ is not an elementary abelian $p$-group .
Then using Corollary  \ref{2}, we have
$|N^{ab}\otimes K^{ab}|>p^{nm-4}$ and so $md(N)>nm-4$  which
implies that  $m(n-d(N))<4.$ Therefore $m=0,1,2,3$ .

If $m=0$,
then $K=1$ and $N$ is one of the groups which are listed in
Theorems \ref{6} and \ref{4}.

 If $m=1$, then $K={\bf{Z}}_{p}$ and $d(N)=n-1, n-2$
or $n-3$, respectively. It follows that $|N^{ab}\otimes
K|=p^{n-1}, p^{n-2}$ or $p^{n-3}$. So Corollary  \ref{2} implies that
$|M(N)|= p^{\frac{n^2-n}{2}-3}, p^{\frac{n^2-n}{2}-2}$, or
$p^{\frac{n^2-n}{2}-1}$. In the first case $N$ is $
{\bf{Z}}_{p}\times {\bf{Z}}_{p} \times {\bf{Z}}_{p^2}, Q, E_2, ,
D \times {\bf{Z}}_{2}$ or $ E_1 \times {\bf{Z}}_{p} \times
{\bf{Z}}_{p}$  and other cases are impossible, by Theorem \ref{3}.

 If $m=2$, then  $d(N)=n-1$ and
$K={\bf{Z}}_{p}\times{\bf{Z}}_{p}$ or $K={\bf{Z}}_{p^2}$. In the
first case $|N^{ab}\otimes K|=p^{2(n-1)}$ and so $|M(N)|=
p^{\frac{n^2-n}{2}-2}$. Therefore $N$ is ${\bf{Z}}_{p} \times
{\bf{Z}}_{p^2}, D$ or $ {\bf{Z}}_{p}\times E_1$. In the second
case $N^{ab}\cong {{\bf{Z}}_{p}}^{(n-1)}$ or $N^{ab}\cong
{\bf{Z}}_{p^2}\times {{\bf{Z}}_{p}}^{(n-2)}$. If $N^{ab}$ is an
elementary abelian $p$-group, then $|N^{ab}\otimes K|=p^{(n-1)}$
and so $|M(N)|= p^{\frac{n^2+n-6}{2}}$ which is impossible and
if  $N^{ab}\cong {\bf{Z}}_{p^2}\times {{\bf{Z}}_{p}}^{(n-2)}$,
then $|M(N)|= p^{\frac{n^2+n-8}{2}}$ which is impossible too.

 If $m=3$,
then  $d(N)=n-1$ and $K$ is an abelian $p$-group of order $p^3$
or an extra special $p$-group of order $p^3$. In the first
case we have three possibility for $K$. The first possibility is
$K\cong {\bf{Z}}_{p}^{{(3)}},$. Then similar to the previous part, one
can see that  $|M(N)|= p^{\frac{n^2-n}{2}-1}$ and so $N\cong E_1$
or ${\bf{Z}}_{p^{2}}$. The second possibility is $K\cong
{\bf{Z}}_{p^{3}}$. This implies that $n=1$ which is a
contradiction. The third possibility is $K \cong
{\bf{Z}}_{p^2}\times {\bf{Z}}_{p}.$ which implies that $n=2$ and $N\cong
{\bf{Z}}_{p^2}$.\\
In the second case,  if $K$ is an extra special $p$-group of order
$p^3$, then $N^{ab}\cong {\bf{Z}}_{p}^{(n-1)}$ or $N^{ab}\cong
{\bf{Z}}_{p^2}\times {\bf{Z}}_{p}^{(n-2)}$. This implies that
$|N^{ab}\otimes K|=|N^{ab}\otimes
{\bf{Z}}_{p}^{(2)}|=p^{2n-2}$ and so $n=1$ which is a contradiction.
 Hence the proof is complete.
\end{proof}

\begin{thm} By the previous assumption,  $t=5$ if and only if  $G$ is isomorphic to one of the following groups:\\
1) $G\cong N\times K$ where $N\cong {\bf{Z}}_{p}$ and $K$ is any
group with $d(K)=m-5$;\\
2) $G\cong N\times K$ where $N\cong {{\bf{Z}}_{p}}^{(5)}$ and $K$
is any
group with $d(K)=m-1$;\\
3) $G= N\cong D \times {\bf{Z}}_{2}^{(3)}$;\\
4) $G= N\cong E_1 \times {\bf{Z}}_{p}^{(4)}$;\\
5) $G= N\cong E_2 \times {\bf{Z}}_{p}^{(2)}$;\\
6) $G= N\cong  E_4 \times {\bf{Z}}_{p};$\\
7) $G= N\cong$ extra special $p$-group of order $p^5$;\\
8) $G= N\cong \langle a,b| a^{p^2}=b^{p^2}=1, [a,b,a]=[a,b,b]=1, [a,b]=a^{p} \rangle$;\\
9) $G= N\cong \langle a,b| a^{p^2}=b^p=1, [a,b,a]=[a,b,b]=a^{p}, [a,b,b,b,]=1 \rangle$;\\
10) $G= N\cong \langle a,b| a^{p^2}=b^p=1, [a,b,a]=[a,b,b,b,]=1, [a,b,b]=a^{np}\rangle$ where $n$ is a fixed quadratic non-residue of $p$ and $p\neq 3$ ;\\
11) $G= N\cong \langle a,b| a^{p^2}=1, b^3=a^3,[a,b,a]=[a,b,b,b,]=1, [a,b,b]=a^{6}\rangle$;\\
12) $G= N\cong \langle a,b|a^{p}=1,b^p=[a,b,b],[a,b,a]=[a,b,b,b,]=[a,b,b,a]=1 \rangle$;\\
13) $G= N\cong D_{16}$;\\
14) $G= N\cong \langle a,b| a^{4}= b^4=1, a^{-1}ba=b^{-1} \rangle $;\\
15) $G= N\cong Q \times {\bf{Z}}_{2}^{(2)}$;\\
16) $G= N\cong (D \times {\bf{Z}}_{2}) >\!\!\!\lhd{\bf{Z}}_{2}$;\\
17) $G= N\cong (Q \times {\bf{Z}}_{2}) >\!\!\!\lhd{\bf{Z}}_{2}$;\\
18) $G= N\cong {\bf{Z}}_{2} \times <a,b,c| a^{2}= b^2=c^2=1, abc=bca=cab>;$\\
19) $G= N\cong {\bf{Z}}_{p^3}\times {\bf{Z}}_{p}$;\\
20) $G= N\cong {\bf{Z}}_{p^2}\times {{\bf{Z}}_{p}}^{(4)};$\\
21) $G\cong N\times K$ where  $K={{\bf{Z}}_{p}}$ and $N\cong
D\times { {\bf{Z}}_{2}}^{(2)} $;\\
22) $G\cong N\times K$ where  $K={{\bf{Z}}_{p}}$ and $N\cong
Q\times { {\bf{Z}}_{2}} $;\\
23) $G\cong N\times K$ where  $K={{\bf{Z}}_{p}}$ and $N\cong
<a,b,c| a^{2}= b^2=c^2=1, abc=bca=cab> $;\\
24) $G\cong N\times K$ where  $K={{\bf{Z}}_{p}}$ and $N\cong
E_4 $;\\
25) $G\cong N\times K$ where  $K={{\bf{Z}}_{p}}$ and $N\cong
E_1\times { {\bf{Z}}_{p}}^{(3)} $;\\
26) $G\cong N\times K$ where  $K={{\bf{Z}}_{p}}$ and $N\cong
{\bf{Z}}_{p}^{(4)}  >\!\!\!\lhd  {\bf{Z}}_{p}$;\\
27) $G\cong N\times K$ where  $K={{\bf{Z}}_{p}}$ and $N\cong  E_2 \times {\bf{Z}}_{p}$;\\
28) $G\cong N\times K$ where  $K={{\bf{Z}}_{p}}$ and $N\cong
E_2\times { {\bf{Z}}_{p}} $;\\
29) $G\cong N\times K$ where  $K={{\bf{Z}}_{p}}^{(2)}$ and $N\cong
E_1\times { {\bf{Z}}_{p}}^{(2)} $;\\
30) $G\cong N\times K$ where  $K={{\bf{Z}}_{p}}^{(2)}$ and $N\cong
{ {\bf{Z}}_{p}}^{(2)}\times { {\bf{Z}}_{p^2}} $;\\
31) $G\cong N\times K$ where  $K={{\bf{Z}}_{p}}^{(2)}$ and $N\cong
Q $;\\
32) $G\cong N\times K$ where  $K={{\bf{Z}}_{p}}^{(2)}$ and $N\cong
E_2 $;\\
33) $G\cong N\times K$ where  $K={{\bf{Z}}_{p}}^{(2)}$ and $N\cong
D\times { {\bf{Z}}_{2}}; $\\
34) $G\cong N\times K$ where  $K={{\bf{Z}}_{p^2}}$ and $N\cong
E_1$;\\
35) $G\cong N \times K$ where  $K={{\bf{Z}}_{p^2}}$ and $N\cong
{ {\bf{Z}}_{p}}\times { {\bf{Z}}_{p^2}} $;\\
36) $G\cong N\times K$ where  $K={{\bf{Z}}_{p^3}}$ and $N\cong
 { {\bf{Z}}_{p^2}} $;\\
37) $G\cong N\times K$ where  $K={{\bf{Z}}_{p}}^{(3)}$ and $N\cong
D $;\\
38) $G\cong N \times K$ where  $K={{\bf{Z}}_{p}}^{(3)}$ and $N\cong
E_1 \times {{\bf{Z}}_{p}} $;\\
39) $G\cong N\times K$ where  $K={{\bf{Z}}_{p}}^{(3)}$ and $N\cong
{ {\bf{Z}}_{p}}\times { {\bf{Z}}_{p^2}} $;\\
40) $G\cong N\times K$ where  $K$ is an extra special $P$-group
and $N\cong
{ {\bf{Z}}_{p^2}} $;\\
41) $G\cong N\times K$ where  $K={{\bf{Z}}_{p}}^{(4)}$ and $N\cong
E_1 $,\\
42) $G\cong N\times K$ where  $K={{\bf{Z}}_{p}}^{(2)}\times {
{\bf{Z}}_{p}}$ and $N\cong
{ {\bf{Z}}_{p^2}}. $\\
\end{thm}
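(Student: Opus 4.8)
The plan is to mirror exactly the structure of the proof of the $t=4$ theorem, since the setup $G \cong N \times K$ with $|N| = p^n$, $|K| = p^m$ is identical and the only change is the value of the defect. By Corollary \ref{2} we have $|M(G,N)| = |M(N)|\,|N^{ab}\otimes K^{ab}|$, and comparing with $p^{\frac{1}{2}n(2m+n-1)-5}$ reduces everything to a finite case analysis on $m$ (equivalently on $n - d(N)$). The necessity direction is immediate from $G = N\times K$ together with Corollary \ref{2}, exactly as before, so the work is entirely in the sufficiency direction.

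\noindent\textbf{Step 1: the elementary abelian case.} First I would suppose $N$ is elementary abelian of rank $n$. Then $N^{ab} = N$, $N^{ab}\otimes K^{ab} \cong {\bf{Z}}_p^{(n\,d(K))}$, and $M(N)$ has order $p^{\binom{n}{2}}$. Plugging into Corollary \ref{2} and equating exponents gives $n\,d(K) = nm - 5$, i.e. $n(m - d(K)) = 5$. Since $5$ is prime this forces $n = 1$ (with $d(K) = m-5$) or $n = 5$ (with $d(K) = m-1$), yielding cases (1) and (2).

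\noindent\textbf{Step 2: reduce to small $m$.} If $N$ is not elementary abelian then $|M(N)| < p^{\binom{n}{2}}$ forces $|N^{ab}\otimes K^{ab}| > p^{nm-5}$; since $|N^{ab}\otimes K^{ab}| \le p^{m\,d(N)}$ this gives $m(n - d(N)) < 5$, and because $N$ is non-(elementary abelian) we have $n - d(N) \ge 1$, so $m \in \{0,1,2,3,4\}$. I would then split into these five subcases. For $m = 0$ we have $K = 1$, $|M(N)| = p^{\binom{n}{2}-5}$, and $N$ is read off directly from Theorem \ref{7} (abelian case) and Theorem \ref{5} (non-abelian case) — this produces cases (3)–(20). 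For $m = 1$, $K = {\bf{Z}}_p$ and $d(N) \in \{n-1,n-2,n-3,n-4\}$; in each sub-subcase $|N^{ab}\otimes K| = p^{d(N)}$ pins down $|M(N)|$ as $p^{\binom{n}{2}-4}$, $p^{\binom{n}{2}-3}$, $p^{\binom{n}{2}-2}$, or $p^{\binom{n}{2}-1}$, and Theorems \ref{3}, \ref{6}, \ref{4} give the list of possible $N$, discarding those whose abelianization has the wrong rank. This yields cases (21)–(28). The cases $m = 2$ (with $K \cong {\bf{Z}}_p^{(2)}$ or ${\bf{Z}}_{p^2}$), $m = 3$ (with $K$ abelian of order $p^3$ in its three flavours, or $K$ extra special), and $m = 4$ (with $K$ abelian or extra special of order $p^4$) are handled the same way, each time computing $|N^{ab}\otimes K|$ from the invariant-factor decomposition of $N^{ab}$, feeding the resulting $|M(N)|$ into Theorems \ref{3}–\ref{7}, and throwing out the combinations that are numerically impossible or that contradict $d(N) = n - (n - d(N))$. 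These produce the remaining cases (29)–(42), and one checks that $m = 4$ together with $n - d(N) \ge 1$ forces $m(n-d(N)) \ge 4$, so only $n - d(N) = 1$ survives, giving $N$ abelian with $|M(N)| = p^{\binom{n}{2}-1}$, hence $N \cong E_1$ or ${\bf{Z}}_{p^2}$; since $E_1$ is non-abelian only $N \cong {\bf{Z}}_{p^2}$ remains, matching case (42) (and one must separately allow $K$ extra special, giving case (40) with $N \cong {\bf{Z}}_{p^2}$ when $m=3$, $n-d(N)=1$).

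\noindent\textbf{Main obstacle.} The genuinely delicate part is bookkeeping: for each divisor configuration one must enumerate the structure of $N^{ab}$ compatible with $|N| = p^n$ and the required $d(N)$, compute $|N^{ab}\otimes K^{ab}|$ correctly (this is where errors creep in, since $\otimes$ of cyclic groups depends on the minimum of the exponents), and then cross-reference against four different classification theorems while discarding impossible exponent counts. There is also a subtlety in the borderline subcases — e.g. when $|M(N)|$ would have to equal $p^{\binom{n}{2}}$ exactly, forcing $N$ elementary abelian contrary to assumption, or when the computed exponent is not of the form $\binom{n}{2} - t$ for any valid $t$ — which must be explicitly ruled out as "impossible." I expect the proof to consist almost entirely of this structured elimination, with no new idea beyond the one already used for $t=4$; the only extra care needed is that the list of non-abelian $p$-groups with defect $5$ (Theorem \ref{5}) is longer, so the $m=0$ subcase contributes many more entries, and one must verify none of them coincidentally reappear under $m\ge 1$ with a nontrivial $K$ factored off.
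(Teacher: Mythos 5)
Your overall strategy is exactly the paper's: the paper itself gives only a sketch ("similar to the proof of the previous theorem"), reducing via Corollary \ref{2} to $n(m-d(K))=5$ in the elementary abelian case and to $m(n-d(N))<5$ otherwise, then running through $m=0,1,2,3,4$ against Theorems \ref{3}, \ref{6}, \ref{7}, \ref{4}, \ref{5}. Your Steps 1 and the subcases $m=0,1,2,3$ match the paper's argument.

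There is, however, a concrete error in your $m=4$ branch. From $m=4$ and $m(n-d(N))<5$ you correctly get $n-d(N)=1$, but you then assert this "gives $N$ abelian" — it does not ($E_1$ has $n=3$, $d(E_1)=2$, so $n-d(N)=1$). Having correctly arrived at $|M(N)|=p^{\binom{n}{2}-1}$, hence $N\cong E_1$ or $N\cong {\bf{Z}}_{p^2}$ by Theorem \ref{3}(ii), you discard $E_1$ on the strength of that false claim and thereby lose case (41) of the statement ($K\cong {\bf{Z}}_p^{(4)}$, $N\cong E_1$), which is a genuine occurrence: $|M(E_1\times {\bf{Z}}_p^{(4)},E_1)|=p^{2}\cdot p^{8}=p^{10}=p^{\frac{1}{2}\cdot 3(8+3-1)-5}$. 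The paper instead splits on the isomorphism type of $N^{ab}$ (elementary abelian versus ${\bf{Z}}_{p^2}\times{\bf{Z}}_p^{(n-2)}$) and on $d(K)$: the first branch forces $K\cong{\bf{Z}}_p^{(4)}$ and $N\cong E_1$ (case 41), the second forces $K\cong{\bf{Z}}_{p^2}\times{\bf{Z}}_p^{(2)}$ and $N\cong{\bf{Z}}_{p^2}$ (case 42). Your single computation $|M(N)|=p^{\binom{n}{2}-1}$ also silently assumes $|N^{ab}\otimes K^{ab}|=p^{4(n-1)}$, which only holds in the first of these branches; the two branches must be kept separate. The rest of the proposal is sound.
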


\begin{proof} The proof of this theorem is similar to the proof
of the previous theorem so we left the details to the reader.
Necessity is straight forward. For sufficiency first suppose that $N$ is an elementary abelian $p$-group . Then
 $n(m-d(K))=5$, so $n=1$ or 5. If $n=1$, then $N\cong
 {\bf{Z}}_{p}$ and $K$ is any group with $d(K)=m-5$ and $n=5$
 implies that $N\cong
 {\bf{Z}}_{p}^{(5)}$ and $K$ is any group with $d(K)=m-1.$

Suppose that $N$ is not an elementary abelian $p$-group. Then we have
$|N^{ab}\otimes K^{ab}|>p^{nm-5}$, by Corollary  \ref{2}. It follows
that $md(N)>nm-5$ and thus $m(n-d(N))<5$, which implies that
$m=0,1,2,3$ or $4.$  If $m=0$, then $K=1$ and $N$ is one of the
groups that are listed in Theorems \ref{7} and \ref{5}.

 If $m=1$, then $K={\bf{Z}}_{p}$
and $d(N)=n-i$, for $1 \leq i \leq 4$. Therefore by Corollary
 \ref{2}, $|M(N)|= p^{\frac{n(n-1)}{2}-(5-i)}$ for $1 \leq i \leq 4$,
respectively. It follows that $N\cong {\bf{Z}}_{p^2}\times
{{\bf{Z}}_{p}}^{(3)}$, $D \times {\bf{Z}}_{2}^{(2)}$, $Q \times
{\bf{Z}}_{2}$, $E_4$, $E_1 \times
{\bf{Z}}_{p}^{(3)}$, $E_2 \times {\bf{Z}}_{p}$, $N\cong{\bf{Z}}_{p^3}$ or $\langle a,b,c| a^2=b^2=c^2=1, abc=bca=cab \rangle$, by Theorems  \ref{4}, \ref{6} and \ref{3}.

 If $m=2$, then
$K={\bf{Z}}_{p^2}$ or $K={\bf{Z}}_{p}\times{\bf{Z}}_{p}$ and
$d(N)=n-1$
 or $d(N)=n-2$. First suppose that $K={\bf{Z}}_{p}\times{\bf{Z}}_{p}$ . If $d(N)=n-1$, then $|M(N)|= p^{\frac{n^2-n}{2}-3}$.
 It follows that $N\cong  {\bf{Z}}_{p}\times {\bf{Z}}_{p} \times
 {\bf{Z}}_{p^2}$, $N \cong E_1 \times {\bf{Z}}_{p} \times {\bf{Z}}_{p}$,
$N\cong Q$, $N\cong E_2$ or $N\cong D \times {\bf{Z}}_{2}$. If
$d(N)=n-2$, then $|N\otimes K|=p^{2(n-2)}$. This implies that
$|M(N)|= p^{\frac{n^2+n-2}{2}}$ and so $n<1$ which is
impossible.\\
Now suppose that $K={\bf{Z}}_{p^2}$. If $d(N)=n-1$ and
$N^{ab}\cong {{\bf{Z}}_{p}}^{(n-1)}$, then $|M(N)|=
p^{\frac{n^2+n-8}{2}}$ which implies that $n=3$ and $|M(N)|=
p^{2}$. Therefore $N\cong E_1$. If $d(N)=n-1$ and $N^{ab}\cong
{\bf{Z}}_{p}^2\times  {{\bf{Z}}_{p}}^{(n-2)}$, then $ n=3$ or $n=4$. For $n=3$ there is not any structure for $N$ and $n=4$ implies that  $N\cong {\bf{Z}}_{p^2}\times  {\bf{Z}}_{p}$.\\
If $d(N)=n-2$, then $N^{ab}\cong {\bf{Z}}_{p}^{(n-2)}$ or
$N^{ab}\cong {\bf{Z}}_{p}^3\times  {\bf{Z}}_{p}^{(n-3)}$ or
$N^{ab}\cong {\bf{Z}}_{p}^2\times {\bf{Z}}_{p}^2\times
{\bf{Z}}_{p}^{(n-4)}$. Therefore similar to the previous case one can see that $n<5$ which is
impossible.

If $m=3$, then  $d(N)=n-1$ and $K$ is an abelian $p$-group of
order $p^3$ or is an extra special $p$-group of order $p^3$. In
the first case we have three possibility for $K$. The first
possibility is $K\cong {\bf{Z}}_{p^3}$. Now if $N^{ab}\cong
{\bf{Z}}_{p}^{(n-1)}$, then $n=1$ which is impossible and if $N^{ab}\cong {\bf{Z}}_{p^2}\times
{\bf{Z}}_{p}^{(n-2)}$, then $N\cong {\bf{Z}}_{p^2}$. The second possibility is $K\cong
{\bf{Z}}_{p^2}\times {\bf{Z}}_{p}.$ In this case there is no
structure for $N$. The third possibility is $K\cong
{\bf{Z}}_{p}^{(3)}$. Thus $|M(N)|= p^{\frac{n^2-n}{2}-2}$ and so
$N\cong D$, $N\cong E_1 \times {\bf{Z}}_{p}$  or $N\cong
{\bf{Z}}_{p^2}\times
{\bf{Z}}_{p}$, by Theorem  \ref{3}.\\
 Now suppose that $K$ is an extra special $p$-group of order
 $p^3$. Then $N^{ab}\cong
{\bf{Z}}_{p}^{(n-1)}$ or $N^{ab}\cong {\bf{Z}}_{p^2}\times
{\bf{Z}}_{p}^{(n-2)}$. If $N^{ab}$ is an elementary abelian, then
there is no structure for $N$.  Otherwise $N={\bf{Z}}_{p^2}$.

If $m=4$, then  $d(N)=n-1$. So $N^{ab}\cong {\bf{Z}}_{p}^{(n-1)}$
or $N^{ab}\cong {\bf{Z}}_{p}^2\times {\bf{Z}}_{p}^{(n-2)}$. In
the first case $|N^{ab}\otimes
K^{ab}|=|{\bf{Z}}_{p}^{(n-1)}\otimes K^{ab}|\leq p^{d(K)(n-1)}$.
Now suppose that $d(K)< 4$. Then we have $|N^{ab}\otimes K^{ab}|\leq
p^{3(n-1)}$. Therefore  $|M(N)| \geq p^{-3(n-1)+(8n+n^2-n-10)/2}$,
by Corollary  \ref{2}. So $n<2$ which is impossible. If $d(K)=4$, then we have
$K\cong {\bf{Z}}_{p}^{(4)}$. Hence $|N^{ab}\otimes
K^{ab}|=p^{4(n-1)}$ which implies that $|M(N)| =p^{(n^2-n)/2-1}$.
So $N\cong E_1$, by Theorem  \ref{3}. \\
In the second case, suppose that $K$ is not abelian. Then  $|N^{ab}\otimes
K^{ab}|=|{\bf{Z}}_{p^2}\times {\bf{Z}}_{p}^{(n-2)}\otimes
K^{ab}|=|{\bf{Z}}_{p^2}\otimes K^{ab}||{\bf{Z}}_{p}^{(n-2)}\otimes
K^{ab}|\leq p^3p^{d(K)(n-2)}\leq p^{3(n-1)}$ which implies that
$n<2$ and it is impossible.\\
If $K$ is abelian, then $K\cong {\bf{Z}}_{p^2}\times
{\bf{Z}}_{p}^{(2)}$. thus $|N^{ab}\otimes K|={p}^{3n-2}$. It
follows that $N\cong {\bf{Z}}_{p^2}$. This completes the proof.
\end{proof}

\vskip 0.4 true cm



\bigskip
\bigskip


{\footnotesize \pn{\bf Fahimeh Mohammadzadeh}\; \\ {Department of Mathematics},
{Payame Noor University} {19395-4697 Tehran, Iran}\\
{\tt Email: F.mohamadzade@gmail.com}\\

{\footnotesize \pn{\bf Azam Hokmabadi}\; \\ {Department of Mathematics},
{Payame Noor University} {19395-4697 Tehran, Iran}\\
{\tt Email: ahokmabadi@pnu.ac.ir}\\

{\footnotesize \pn{\bf Behrooz Mashayekhy}\; \\ {Department of
Mathematics}, {Ferdowsi University of Mashhad, P.O.Box 1159-91775,} {Mashhad, Iran}\\
{\tt Email: bmashf@um.ac.ir}\\
\end{document}